\def\({\left(}
\def\){\right)}
\def\lp{\left(}
\def\rp{\right)}
\newtheorem{theorem}{Theorem}[section]
\newtheorem{proposition}[theorem]{Proposition}
\theoremstyle{definition}
\newtheorem{remark}[theorem]{Remark}
\def\({\left(}
\def\){\right)}
\def\lp{\left(}
\def\rp{\right)}
\title{On the distribution of the norm of partitions}
\author{Walter Bridges and William Craig}
\begin{document}

\maketitle

\begin{abstract}
    The norm of an integer partition is defined as the product of its parts.  This statistic was recently introduced by Schneider in connection to partition zeta functions.  In this note, we use the method of moments to study the distribution of the norm under the uniform probability measure on partitions of $n$ as $n \to \infty$. We use singularity analysis to prove asymptotics for the moments and show as a result that the norm lacks a non-trivial limiting distribution on $[0,\infty)$.
\end{abstract}

\section{Introduction}

A {\it partition} of a natural number $n \geq 0$ is a non-increasing list $\lambda = \lp \lambda_1, \dots, \lambda_\ell \rp$ of non-negative integers such that $\lambda_1 + \lambda_2 + \dots + \lambda_\ell = n$, and we say $\lambda \vdash n$ when $\lambda$ is a partition of $n$. In this paper, we focus on the {\it norm} of a partition $\lambda \vdash n$, defined for $\lambda = \lp \lambda_1, \dots, \lambda_\ell \rp$ as the product of its parts, i.e.
\begin{align*}
    N\lp \lambda \rp := \prod_{j=1}^\ell \lambda_j.
\end{align*}
Interest in this new statistic comes from the work of Schneider and his collaborators \cite{ORR, ORW, R17, R18, R16} to develop a multiplicative theory of partitions that generalizes classical multiplicative number theory.  Here, the norm plays a central role in the construction of partition zeta functions \cite{R16}. For an overview of recent developments in this area, see \cite{ORR}.

The structure of large partitions is a central theme in the theory of partitions.  Celebrated work of Hardy--Ramanujan and Rademacher, using the modularity of the generating function for partitions, yields the well-known asymptotic for $p(n)$, the number of partitions of $n$:
\begin{align*}
    p(n) \sim \dfrac{1}{4n \sqrt{3}} e^{\pi \sqrt{\frac{2n}{3}}}.
\end{align*}
Another early result is the Erd\H{o}s--Lehner Theorem \cite[Theorem 1.1]{EL}, which gives the distribution for the largest part (or equivalently the number of parts) under the uniform measure on partitions of $n$, as $n \to \infty$.  A modern approach to many statistics for partitions was developed by Fristedt in \cite{Fristedt} and was subsequently generalized to a variety of combinatorial structures by Ariata and Tavar\'{e} \cite{AT}. To study the norm, however, we follow the more classical method of moments.

We first recall the maximum of the norm across partitions of $n$. It is given as an exercise in \cite[pp. 30--31]{Halmos} and \cite[p. 5]{Newman} and is restated in Schneider and Sill's recent overview of the norm \cite[Theorem 11]{SS}.

\begin{proposition}\label{P:SS}
    Let $n>1$.  Then we have
    $$
    M_n := \max\{N(\lambda) : \lambda \vdash n\} = \begin{cases} 3^{\frac{n}{3}} & \text{if $n \equiv 0 \pmod{3}$,} \\
    4 \cdot 3^{\frac{n-4}{3}} & \text{if $n \equiv 1 \pmod{3}$,} \\  2  \cdot 3^{\frac{n-2}{3}} & \text{if $n \equiv 2\pmod{3}$.} \\ \end{cases}
    $$
\end{proposition}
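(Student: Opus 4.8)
The plan is to show that any partition attaining the maximum $M_n$ must consist almost entirely of parts equal to $3$, with a bounded correction dictated by $n \bmod 3$. Since there are only finitely many partitions of $n$, the maximum is attained by some $\lambda^* \vdash n$, and the strategy is to constrain the parts of $\lambda^*$ through a short sequence of exchange arguments, each of which strictly increases (or at least preserves) the norm, until the structure of $\lambda^*$ is essentially forced.

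First I would rule out small and large parts. If $\lambda^*$ contains a part equal to $1$, then since $n > 1$ it contains a second part $k$, and replacing $1$ and $k$ by the single part $k+1$ scales the norm by $\tfrac{k+1}{k} > 1$, contradicting maximality; hence $\lambda^*$ has no part equal to $1$. Similarly, if $\lambda^*$ contains a part $a \geq 5$, replacing it by the two parts $3$ and $a - 3$ scales the norm by $\tfrac{3(a-3)}{a}$, which exceeds $1$ precisely when $a \geq 5$; hence every part of $\lambda^*$ lies in $\{2, 3, 4\}$. Since $4 = 2 \cdot 2$, I may replace each part equal to $4$ by two parts equal to $2$ without changing the norm, so I can assume every part is $2$ or $3$. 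Finally, three parts equal to $2$ contribute $2^3 = 8$ to the product while summing to $6$, whereas two parts equal to $3$ contribute $3^2 = 9$ and also sum to $6$; replacing $2 + 2 + 2$ by $3 + 3$ strictly increases the norm, so $\lambda^*$ contains at most two parts equal to $2$.

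At this point the answer is determined by an arithmetic constraint. Writing $\lambda^*$ as $t$ parts equal to $2$ and $s$ parts equal to $3$ with $t \in \{0, 1, 2\}$, the relation $n = 2t + 3s$ gives $2t \equiv n \pmod 3$, so that $t \equiv -n \equiv 2n \pmod 3$ is uniquely pinned down by the residue of $n$: one finds $t = 0$ when $n \equiv 0$, $t = 1$ when $n \equiv 2$, and $t = 2$ when $n \equiv 1$. In each case $s$ is then determined by $s = (n - 2t)/3$, and evaluating $N(\lambda^*) = 2^t 3^s$ reproduces exactly the three cases in the statement.

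The main obstacle, such as it is, is purely bookkeeping rather than conceptual: one must check that each exchange is genuinely strict (so that a true maximizer cannot retain a forbidden part), and then confirm the boundary cases $n = 2, 3, 4$ directly, since for these the reduction steps involving parts $\geq 5$ or a surplus of $2$'s may be vacuous. The substantive content is simply the observation that $3$ is the most efficient part per unit of $n$, with the residue class of $n$ governing the unique optimal correction.
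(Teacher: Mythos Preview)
Your argument is correct: the exchange steps (eliminate $1$'s, split parts $\geq 5$, trade $4$ for $2+2$, trade $2+2+2$ for $3+3$) are the standard reductions, and your residue analysis at the end is accurate. One minor logical point worth tightening: after the non-strict swap of $4$ for $2+2$ you are no longer constraining the original maximizer $\lambda^*$ but rather exhibiting \emph{some} maximizer with parts in $\{2,3\}$; your phrasing mostly reflects this, but it would be cleaner to state up front that you are producing a maximizer of the desired shape rather than characterizing all maximizers.

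As for comparison with the paper: there is nothing to compare. The paper does not prove Proposition~\ref{P:SS}; it merely recalls it as a known exercise, citing Halmos, Newman, and Schneider--Sills. Your write-up is essentially the classical solution to that exercise, so it is consistent with what the cited sources contain, but it goes beyond what the present paper itself supplies.
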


Let $E_n$ denote the expectation operator under the uniform measure on partitions of $n$.  For an integer $\ell \geq 1$, the $\ell$-th moment of the norm is
\begin{align*}
    E_n\lp N^\ell \rp = \dfrac{\sum_{\lambda \vdash n} N\lp \lambda \rp^\ell}{p(n)}.
\end{align*}
We use standard singularity analysis of the generating function for the $\ell$-th moments, to prove the following asymptotic formula.  As in Proposition \ref{P:SS}, the constants that appear to depend on $n \pmod{3}$.

\begin{theorem} \label{Moments theorem}
   Fix an integer $\ell \geq 1$.  For an integer $n$, write $n_0$ for the least residue of $n \pmod{3}$. Then as $n \to \infty$,
   $$
E_{n}(N^{\ell}) \sim c_{\ell}^{(n_0)}\frac{3^{\frac{\ell n}{3}}}{p(n)},
$$
where
$$
c_{\ell}^{(n_0)}:=\sum_{j=1}^3 e^{-\frac{2\pi i j n_0}{3}} \cdot \frac{1}{3}\prod_{\substack{k \geq 1 \\ k \neq 3}}\left(1-k^{\ell}3^{-\frac{k\ell}{3}}e^{\frac{2\pi i j k}{3}}\right)^{-1}.
$$
\end{theorem}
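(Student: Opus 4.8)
The plan is to package the $\ell$-th moments into an Euler-type generating function and extract the coefficient asymptotics by locating its dominant singularities. Recording a partition through its part-multiplicities, if the part $k$ occurs $m_k$ times then $N(\lambda)^\ell=\prod_{k\ge1}(k^\ell)^{m_k}$, so summing over all partitions factors completely:
\[
F_\ell(q) := \sum_{n\ge 0}\Bigg(\sum_{\lambda\vdash n}N(\lambda)^\ell\Bigg)q^n = \prod_{k\ge 1}\frac{1}{1-k^\ell q^k}.
\]
Since $E_n(N^\ell)=p(n)^{-1}[q^n]F_\ell(q)$, it suffices to show $[q^n]F_\ell(q)\sim c_\ell^{(n_0)}3^{\ell n/3}$, after which division by $p(n)$ gives the theorem.

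First I would locate the singularities. The $k$-th factor contributes poles on the circle $|q|=k^{-\ell/k}$, so the radius of convergence is $\rho:=\min_{k\ge1}k^{-\ell/k}$. Because $x\mapsto(\log x)/x$ is maximized at $x=e$ and, among positive integers, is largest at $k=3$ (one checks $k=2,3,4$ against this bound), the minimum is attained \emph{uniquely} at $k=3$, giving $\rho=3^{-\ell/3}$ — this is precisely the origin of the constant $3$ appearing both here and in Proposition \ref{P:SS}. On the circle $|q|=\rho$ the only singularities come from the factor $\tfrac{1}{1-3^\ell q^3}$, namely the three simple poles $q=\rho\zeta$ with $\zeta^3=1$, and the next-closest singularity sits at modulus $2^{-\ell/2}>\rho$. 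Consequently the complementary product $G(q):=\prod_{k\neq 3}(1-k^\ell q^k)^{-1}$ extends holomorphically across the closed disk $|q|\le\rho$ (in fact throughout $|q|<2^{-\ell/2}$), and each value $G(\rho\zeta)$ is a convergent, nonzero infinite product; the key estimate is $k^\ell 3^{-k\ell/3}=(k\,3^{-k/3})^\ell<1$ for every integer $k\neq 3$, which both rules out vanishing denominators and secures absolute convergence.

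Next I would run the transfer. Since $F_\ell$ is meromorphic in $|q|<2^{-\ell/2}$ with only the three simple poles on $|q|=\rho$, subtracting their principal parts leaves a function holomorphic in $|q|<2^{-\ell/2}$, whose coefficients are $O(r^{-n})$ for any $r<2^{-\ell/2}$ and hence exponentially smaller than $\rho^{-n}=3^{\ell n/3}$. Writing $F_\ell(q)=G(q)/(1-3^\ell q^3)$, the residue at $q=\rho\zeta$ equals $G(\rho\zeta)/(-3^{\ell+1}(\rho\zeta)^2)$, and using $(\rho\zeta)^3=3^{-\ell}$ the resulting coefficient contribution simplifies to $\tfrac13 G(\rho\zeta)\,\zeta^{-n}3^{\ell n/3}$. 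Summing over $\zeta=e^{2\pi i j/3}$ for $j=1,2,3$, and using $\zeta^{-n}=e^{-2\pi i j n_0/3}$ together with
\[
G(\rho\zeta)=\prod_{\substack{k\ge1\\k\neq 3}}\left(1-k^\ell 3^{-\frac{k\ell}{3}}e^{\frac{2\pi i jk}{3}}\right)^{-1},
\]
reproduces exactly $c_\ell^{(n_0)}3^{\ell n/3}$, as required.

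I expect the \emph{main obstacle} to be conceptual rather than analytic: the analytic input is clean, since there are only finitely many singularities on the circle of convergence and everything else lies strictly farther out, so the standard transfer (or direct principal-part subtraction) applies verbatim. The genuinely substantive point is the uniqueness of the minimizer $k=3$ of $k^{-\ell/k}$, which dictates the entire singularity structure and must be verified by comparing the first few integers against the calculus bound. The remaining work is careful bookkeeping: tracking the three cube-root-of-unity residues, confirming the absolute convergence of $G(\rho\zeta)$, and checking that the dependence on $n$ enters only through $n_0=n\bmod 3$ so that the sum over $\zeta$ collapses to the stated constant $c_\ell^{(n_0)}$.
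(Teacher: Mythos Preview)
Your proposal is correct and follows essentially the same route as the paper: both identify the generating function $F_\ell(q)=\prod_{k\ge1}(1-k^\ell q^k)^{-1}$, locate the three dominant simple poles on $|q|=3^{-\ell/3}$ by minimizing $k^{-\ell/k}$ over integers, and extract coefficients via singularity analysis. The only cosmetic difference is that you carry out the principal-part subtraction and residue computation explicitly, whereas the paper invokes \cite[Theorem~VI.5]{FS} as a black box; your version is in fact more careful about the convergence and nonvanishing of the complementary product $G(\rho\zeta)$ and about pinpointing the next singularity at $|q|=2^{-\ell/2}$.
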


\begin{remark}
We make the following remarks on Theorem \ref{Moments theorem}.
\begin{enumerate}
    \item[(1)] Following our proof, one could give more terms in the asymptotic expansion above by taking more singularities into account.
    \item[(2)] Since the inequality $\textnormal{Var}\lp E_n \rp := E_n\lp N \rp^2 - E_n\lp N^2 \rp \geq 0$ fails for sufficiently large $n$, no rescaling of $N$ can converge to a non-trivial distribution on $[0,\infty)$. This lack of convergence is somewhat similar to certain hook-length statistics for partitions studied by Lang, Wan and Xu \cite{LWX}.
    \item[(3)] With a little extra work, one could obtain any finite-length asymptotic expansion for $E_n\lp N^\ell \rp$ using singularity analysis as in \cite[Chapter VI]{FS}. Indeed, it may be possible to use this method to compute moments of a wide range of multiplicative partition statistics.    See Schneider's constructions in \cite{R17, R16}.
\end{enumerate}
\end{remark}

\begin{table}[h]
\begin{tabular}{l|l|l|l}
    $\ell$ & $c^{(1)}_{\ell}$ & $c^{(2)}_{\ell}$  & $c^{(3)}_{\ell}$  \\ \hline
1 & $97922.939...$   & 97922.904...  & 97923.267...  \\ \hline
2 & $667.849...$   & 667.848... & 668.148...  \\ \hline
 3    &86.275...     & 86.298... & 86.602...   \\ \hline
 4    & 26.884...    & 26.927...  & 27.247...  \\ \hline
 5    & 12.453... & 12.513... & 12.851...  \\ \hline
  6   & 7.157...    & 7.229... &  7.588... \\ \hline
   7  & 4.689...    & 4.771... & 5.152... \\ \hline
    8 & 3.347...    & 3.435... & 3.839... \\ \hline
     9& 2.535...    & 2.628...  & 3.054... \\ \hline
         10 & 2.003...     & 2.100...  & 2.548... \\ \hline
\end{tabular}
\caption{The constants in Theorem \ref{Moments theorem}.} \label{T:1}
\end{table}

\begin{proof}[Proof of Theorem \ref{Moments theorem}]
    Let $\ell \geq 1$ be an integer. Using straightforward manipulations with generating series, it is clear that
    \begin{align*}
        F_\ell(q) := 1+ \sum_{n \geq 1} p(n)E_n\lp N^\ell \rp q^n = \sum_\lambda N\lp \lambda \rp^\ell q^{|\lambda|} = \prod_{k \geq 1} \dfrac{1}{1 - k^\ell q^k},
    \end{align*}
    where the sum $\sum\limits_\lambda$ runs over all partitions, regardless of size. As the series $\sum_{k \geq 1} k^{\ell}q^k$ converges for $|q|<1$, we see that the product defining $F_{\ell}$ converges absolutely for $|q|<1$, so long as $q$ avoids zeros of denominators in the product.  Hence, $F_{\ell}$ has singularities at 
    $$q=e^{\frac{2\pi i j}{k}}k^{-\frac{\ell}{k}}, \qquad \text{for $1\leq j \leq k,$}$$
    and is otherwise holormorphic for $|q|<1$.  Observe that $k \mapsto k^{-\frac{\ell}{k}}$ is a increasing function for $k \geq 3$, and for $k \in \mathbb{N}$ it obtains a minimum at $k=3$.  Thus, $F_{\ell}(q)$ has a meromorphic continuation to $|q|<2^{-\frac{1}{2}}$ with simple poles at $q=e^{\frac{2\pi i j}{3}}3^{-\frac{\ell}{k}}$ for $1 \leq j \leq 3$.

    We now apply singularity analysis as described in \cite[Chapter VI]{FS}.  As $q \to e^{\frac{2\pi i j}{3}}3^{-\frac{\ell}{3}}$, we have
    \begin{align*}
    F_{\ell}(q) &\sim \frac{1}{1-e^{-\frac{2\pi i j}{3}}3^{\frac{\ell}{3}}q} \cdot \frac{1}{(1-e^{-\frac{2\pi i (j+1)}{3}}e^{\frac{2\pi i j}{3}})(1-e^{-\frac{2\pi i (j+2)}{3}}e^{\frac{2\pi i j}{3}})}\prod_{\substack{k \geq 1 \\ k \neq 3}}\left(1-k^{\ell}3^{-\frac{k\ell}{3}}e^{\frac{2\pi i j k}{3}}\right)^{-1} \\
    &=\frac{1}{1-e^{-\frac{2\pi i j}{3}}3^{\frac{\ell}{3}}q} \cdot \frac{1}{3}\prod_{\substack{k \geq 1 \\ k \neq 3}}\left(1-k^{\ell}3^{-\frac{k\ell}{3}}e^{\frac{2\pi i j k}{3}}\right)^{-1}.
    \end{align*}
    Theorem 1.1 follows immediately from \cite[Theorem VI.5]{FS}.
\end{proof}

\end{document}